\newcommand{\real}{\mathbb R}
\numberwithin{equation}{section} \setcounter{page}{1}
\DeclareMathOperator*{\esssup}{ess\,sup}
\DeclareMathOperator*{\essinf}{ess\,inf}
\newcommand{\la}{\langle}
\newcommand{\ra}{\rangle}
\newcommand{\ol}{\overline}
\newcommand{\ul}{\underline}
\begin{document}
\title{Multivalued nonmonotone dynamic boundary condition}
\titlerunning{}        

\author{Khadija Aayadi \and Khalid Akhlil \and Sultana Ben Aadi \and Mourad El Ouali}


\institute{ 
Khadija Aayadi, FPO, Ibn Zohr University, khadija.aayadi@gmail.com\\
Khalid Akhlil, FPO, Ibn Zohr University, akhlil.khalid@gmail.com\\
Sultana Ben Aadi, Ibn Zohr University, sultana.benaadi@edu.uiz.ac.ma\\
Mourad El Ouali, FPO, Ibn Zohr University, mauros1608@gmail.com}

\date{Received: date / Accepted: date}
\maketitle

\begin{abstract}
In this paper, we introduce a new class of hemivariational inequalities, called dynamic boundary hemivariational inequalities reflecting the fact that the governing operator is also active on the boundary. In our context, it concerns the Laplace operator with Wentzell (dynamic) boundary conditions perturbed by a multivalued nonmonotone operator expressed in terms of Clarke subdifferentials. We will show that one can reformulate the problem so that standard techniques can be applied. We will use the well-established theory of boundary hemivariational inequalities to prove that under growth and general sign conditions, the dynamic boundary hemivariational inequality admits a weak solution. Moreover, in the situation where the functionals are expressed in terms of locally bounded integrands, a "filling in the gaps" procedure at the discontinuity points is used to characterize the subdifferential on the product space. Finally, we prove that, under a growth condition and eventually smallness conditions,  Faedo-Galerkin approximation sequence converges to a desired solution.
\end{abstract}

\subclass{47H04, 47H14, 47H30, 47J20, 47J22, 47J35.}

\keywords{Dynamic boundary hemivariational inequality, Wentzell boundary condition, Clarke subdifferential, nonconvex optimization.}

\tableofcontents

\section{Introduction}
Let $\Omega\subset\mathbb R^d$ with enough smooth boundary $\Gamma$, and $A=-\nabla.(a\nabla)$ be a uniformly elliptic differential operator. Then, the operator $A$ with Wentzell boundary conditions is given by the system

\begin{equation}\label{wentz}
 \left\{
  \begin{array}{ll}
    Au=f & \hbox{ in } \Omega; \\[0.2cm]
    -Au+b\,\partial_\nu^a\,u+c u=0 & \hbox{ on } \Gamma
  \end{array}
\right.
\end{equation}where $b$ and $c$ are nonnegative bounded measurable functions on $\Gamma $ and $\partial_\nu^a\,u=(a\nabla u).\nu$ is the co-normal derivative of $u$ with respect to $a$. The fact that the boundary condition in \eqref{wentz} could involve the operator $A$ ( eventually with jumps) goes back to the pioneering paper of Wentzell \cite{W59}. What make Wentzell boundary condition relevant in the applications is the time-derivative introduced in the boundary conditions. More explicitly, the heat equation with the Wentzell boundary condition becomes
\begin{equation}\label{evol}
 \left\{
  \begin{array}{ll}
   u'+A u=f_1 & \quad\hbox{ in } [0,+\infty[\times\Omega ; \\[0.1cm]
    u'+b\partial_\nu^a \,u+cu=f_2 & \quad\hbox{ on }
[0,+\infty[\times\Gamma;\\[0.1cm]
u(x,0)=u_0(x).
  \end{array}
\right.
\end{equation}

The heat equation \eqref{evol} corresponds to the situation where there is a heat source acting on the boundary \cite{FGGR02,FGGR06}. Moreover, in the model of vibrating membrane, the Wentzell boundary condition arises if we assume the boundary $\Gamma$ can be affected by vibrations in $\Omega$ and thus contributes to the kinetic energy of the system \cite{C75,L32}. In \cite{MW28} dynamic boundary conditions are derived for a solid in contact with thin layer of stirred liquid with a heat exchange coefficient represented here by $c$. A detailed derivation of dynamic boundary conditions can also be found in \cite{G06} and references therein. In many physical situations the exchange rate of heat diffusion can be nonlinear or nonmonotone. In this situation, the Wentzell boundary problem becomes
\begin{equation}\label{evol2}
 \left\{
  \begin{array}{ll}
   u'+A u+\gamma_1(u)\ni f_1 & \quad\hbox{ in } [0,+\infty[\times\Omega \\[0.1cm]
    u'+b\,\partial_\nu^a \,u+\gamma_2(u)\ni f_2 & \quad\hbox{ on }
[0,+\infty[\times\Gamma;\\[0.1cm]
u(x,0)=u_0(x).
  \end{array}
\right.
\end{equation}
The nonlinear, eventually nonmonotone, dynamic boundary conditions have been extensively studied in recent years. In the case where $\gamma_1=\partial \phi_1$ and $\gamma_2=\partial \phi_2$ are the subdifferentials, in the sense of convex analysis, of proper, convex and lower semicontinuous functionals $\phi_1$ and $\phi_2$ respectively (hence with maximal monotone graphs), the problem \eqref{evol2} generates, by a result from Minty \cite{M62}, a unique solution described by a strongly continuous nonlinear semigroup\cite{FGGR02,FGGR06,W07,W12,W13}. In the case of nonmonotone but single valued $\gamma_1$ and $\gamma_2$ such problems were considered in \cite{GW10,G12,G15}, and references therein. The functions $\gamma_1$ and $\gamma_2$ was supposed to be of class $C^1$ and satisfy a sign-growth conditions in the spirit of critical point theory.

The main tool in studying the above problems is to work on a product space instead of the state space itself. This trick, now a standard procedure, provide a good insight into the structure of the problem. In fact, consider functions of the form $U=(u,u_{|\Gamma})$ defined on a suitable product space and define the operator $\mathcal A$ by $\mathcal A U=(Au,b\,\partial_\nu^a\,u+cu)$, then the problem \eqref{evol} can be formulated as follows 
\begin{equation}\label{evol3}
 \left\{
  \begin{array}{ll}
    U'+\mathcal A U=f \\[0.2cm]
U(0)= U_0.
  \end{array}
\right.
\end{equation}where $f=(f_1,f_1)$. The idea to incorporate boundary conditions into a product space goes back to Greiner \cite{G87} and has been used by Amann and Escher \cite{AE96}, Arendt \textit{et al}. \cite[Chapter 6]{ABHN01}  and in \cite{VV03} in the context of Dirichlet forms. In the context of heat equations, Wentzell boundary conditions were introduced by A. Favini \textit{et al.} \cite{FGGR02}, see also \cite{CFGGGOR09,MR06}.

In the situation of nonlinear multivalued dynamic boundary conditions \eqref{evol2}, the product space procedure leads to 
\begin{equation}\label{evol4}
 \left\{
  \begin{array}{ll}
    U'+\mathcal A U+\partial \phi_1(u)\times\partial \phi_2(u_{|\Gamma})\ni f \\[0.2cm]
U(0)=U_0.
  \end{array}
\right.
\end{equation}
In the framework of convex functionals, the regularity, in the sense of \cite{C75}, of $\phi_1$ or $\phi_2$ at all points allows us to write the inclusion 
\begin{equation}\label{inclusion}
\partial \phi(U)\subset \partial \phi_1(u)\times\partial \phi_2(u_{|\Gamma})
\end{equation}
where the functional $\phi$ is defined by $\phi(U)=\phi_1(u)+\phi_2(u_{|\Gamma})$. Moreover, by Proposition \ref{partial2} the equality holds in \eqref{inclusion}. This is due to the structure of $\phi$ as a variable separated functional. The problem \eqref{evol4} is then equivalent to the following problem
\begin{equation}\label{evol7}
 \left\{
  \begin{array}{ll}
    U'+\mathcal A U+\partial \phi(U)\ni f \\[0.2cm]
U(0)=U_0.
  \end{array}
\right.
\end{equation}
and can be solved by the nonlinear semigroups theory or the variational inequality theory. In the nonconvex functionals framework, the inclusion \eqref{inclusion} still holds but not the equality except for the regular case. Hence, generally, one can only say that the solvability of \eqref{evol7} implies the solvability of \eqref{evol4}, which, in addition, can not be expressed as a variational inequality due to a lake of monotonicity.

It is the aim of this paper to prove that the problem \eqref{evol7} have a weak solution by using the theory of hemivariational inequality. This theory was initiated by Panagiotopoulus as a generalization of the variational inequality theory \cite{P88,P89,P12}. Hemivariational inequalities are suitable to model physical and engineering problems where multivalued and nonmonotone constitutive laws are involved(cf., e. g. \cite{1MBA20,2MBA20} and references therein). The main tool in this formulations is the generalized gradient of Clarke and Rockafellar \cite{C83,C75,Rock80}. As a subclass of this theory, we can mention boundary hemivariational inequalities, which are boundary value problems where the boundary condition is multivalued, nonmonotone and of subdifferential form (cf. \cite{MO04,M01,PH08} and references therein). Other aspects including evolution inclusions or boundary conditions which are multivalued, nonmonotone and of subdifferential form can be found in \cite{BGK18,BGLS18,GK16,GOS15,GOS16}.

In this paper we introduce a new class of problems within this theory. It concerns dynamic boundary conditions where the boundary condition is multivalued, nonmonotone and of the subdifferential form. The dynamic boundary hemivariational inequalities can model problems where the boundary contains a thermostat regulating the temperature within certain specified bounds. It can also be incorporated into Navier-Stokes equations to obtain a variant of Boussinesq model describing the behaviour of a heat conducting liquid with boundaries participating in the total energy. The exchange of the heat with the boundary can then be expressed, in a general way, with a multivalued, nonmonotone and of the subdifferential form functional.

The structure of this paper is as follows. In Section 2, we present the preliminary material needed later. In section 3, we state our problem in a suitable functional spaces and we prove the existence of weak solutions in section 4. In section 5 we present a seemingly new result related to partial generalized gradient for nonregular locally Lipschitz  functions and we prove a Chang's type lemma related to locally bounded functions. Finally, we devote section 6 to the convergence of the Faedo-Galerkin approximation to desired solutions.

\section{Preliminaries}

Let $E$ be a reflexive Banach space with its dual $E^*$ and $A:D(A)\subset E\rightarrow 2^{E^*}$ be a multivalued function, where \[D(A)=\{u\in E:\, Au\neq\emptyset\}\] stands for the domain of $A$. We say that $A$ is \textit{monotone} if $\la u^*-v^*,u-v\ra_{E^*\times E}\geq 0$ for all $u^*\in Au$,\, $v^*\in Av$  and  $u,\,v\in D(A)$. If moreover, $A$ has a maximal graph in the sense of inclusion among all monotone operators, then we say that $A$ is maximal monotone. In the theory of multivalued nonlinear inclusions, we may apply a multivalued perturbation. This can be made with respect to \textit{pseudomonotone} operators, that is, operators satisfying the following properties,
\begin{enumerate}
\item[(a)] for each $u\in E$, the set $Au$ is nonempty, closed and convex in $E^*$.
\item[(b)] $A$ is upper semicontinuous from each finite dimensional subspace of $E$ into $E^*$ endowed with its weak topology;
\item[(c)] if $u_n\rightarrow u$ weakly weakly in $E$, $u_n^*\in Au_n$ and $\limsup_{n\to\infty}\limits\,\la u_n^*,u_n-u\ra_{E^*\times E}\leq 0$, then for each $v\in E$ there exists $v^*\in Au$ such that $\la v^*,u-v\ra_{E^*\times E}\leq \liminf_{n\to\infty}\limits\,\la u_n^*,u_n-v\ra_{E^*\times E} $.
\end{enumerate}
When dealing with evolution inclusions, another concept of pseudomonotonicity should be introduced. In fact, a large class of evolution inclusions can be written as the sum of a maximal monotone operator resulting from the time derivative and a multivalued operator. This pseudomonotonicity should be defined with respect to the maximal monotone operator as follows: an operator  $A$ is \textit{pseudomonotone with respect to $D(L)$}(or $L-$pseudomonotone), if for a linear, maximal monotone operator $L:D(L)\subset E\rightarrow E^*$, if $(a)$ and $(b)$ are satisfied and
\begin{enumerate}
\item[(c')] for each sequences $\{u_n\}\subset D(L)$ and $\{u_n^*\}\subset E^*$ with $u_n\rightarrow u$  weakly in  $E$, $Lu_n\rightarrow Lu$  weakly in  $E^*$,  $u_n^*\in Au_n$  for all $n\in\mathbb N$, $u_n^*\rightarrow u^*$ weakly in $E^*$ and  $\displaystyle\limsup_{n\to+\infty}\la u_n^*,u_n-u\ra_{E^*\times E}\leq 0$, we have $u^*\in Au$ and $\displaystyle\lim_{n\to+\infty}\la u_n^*,u_n\ra_{E^*\times E}=\la u^*,u\ra_{E^*\times E}$.
\end{enumerate}

$A$ is \textit{coercive}  if there exists a function $c:\mathbb R^+\rightarrow\mathbb R$ with $c(r)\rightarrow\infty$ as $r\to\infty$ such that $\la u^*,u\ra_{E^*\times E}\geq c(\|u\|_E)\|u\|_E$ for every $(u,u^*)\in\mathrm{Graph}(A)$.

For a single-valued operator $A:E\rightarrow E^*$, we say that $A$ is \textit{demicontinuous} if it is continuous from $E$ to $E^*$ endowed with weak topology and $A$ \textit{pseudomonotone} if for each sequence $\{u_n\}\subset E$ such that it converges weakly to $u\in E$ and $\displaystyle\limsup_{n\to\infty}\la Au_n,u_n-u\ra_{E^*\times E}\leq 0$, we have $\la Av,u-v\ra_{E^*\times E}\leq \displaystyle\liminf_{n\to\infty}\la A u_n,u_n-v\ra_{E^*\times E}$ for all $v\in E$.

Now let $\varphi:E\rightarrow \ol{\mathbb R}:=\mathbb R\cup\{+\infty\}$ be a proper, convex and lower semicontinuous functional. The mapping $\partial_c\varphi:E\rightarrow 2^{E^*}$ defined by 
\[
\partial_c\varphi(u)=\{u^*\in E^*:\,\la u^*,v-u\ra_{E^*\times E}\leq \varphi(v)-\varphi(u)\text{ for all }v\in E \}
\]is called the subdifferential of $\varphi$. Any element $u^*\in \partial_c\varphi(u)$ is called a subgradient of $\varphi$ at $u$. It is a well know fact that $\partial\varphi_c$ is a maximal monotone operator.

Let $\Phi:E\rightarrow\mathbb R$ be a locally Lipschitz continuous functional and $u,\,v\in E$. We denote by $\Phi^\circ(u;v)$ the generalized Clarke directional derivative of $\Phi$ at the point $u$ in the direction $v$ defined by
\[
\Phi^\circ(u;v)=\displaystyle\limsup_{w\to u,\,t\downarrow 0}\frac{\Phi(w+tv)-\Phi(w)}{t}
\]
The generalized Clarke gradient $\partial \Phi:E\rightarrow 2^{E^*}$ of $\Phi$ at $u\in E$ is defined by
\[
\partial \Phi(u)=\{\xi\in E^*:\,\la\xi,v\ra_{E^*\times E}\leq \Phi^\circ(u;v)\text{ for all } v\in E\}
\]
We collect the following properties
\begin{enumerate}
\item[(a)] the function $v\mapsto \Phi^\circ(u;v)$ is positively homogeneous, subadditive and satisfies
\[
|\Phi^\circ(u;v)|\leq L_u\|v\|_E\text {for all } v\in E
\]where $L_u>0$ is the rank of $J$ near $u$.
\item[(b)] $(u,v)\mapsto \Phi^\circ(u;v)$ is upper semicontinuous.
\item[(c)] $\partial \Phi(u)$ is a nonempty, convex and weakly$^*$ compact subset of $E^*$ with $\|\xi\|_{E^*}\leq L_u$ for all $\xi\in\partial \Phi(u)$.
\item[(d)] for all $v\in E$, we have $\Phi^\circ(u;v)=\max\{\la\xi,v\ra_{E^*\times E}:\,\xi\in\partial \Phi(u)\}$.
\item[(e)] Let $F$ be another Banach spaces and $\\mathfrak t\in\mathcal L(F,E)$. Then
\begin{enumerate}
\item[(i)] $(\Phi\circ \mathfrak t)^\circ(u;v)\leq \Phi^\circ(\mathfrak tu;\mathfrak tv)$ for $u,\,v\in E$.
\item[(ii)] $\partial(\Phi\circ \mathfrak t)(u)\subseteq \mathfrak t^*\partial \Phi(\mathfrak tu)$ for $u\in E$ and where $\mathfrak t^*\in\mathcal L(E^*,F^*)$ denotes the adjoint operator to $\mathfrak t$.
\end{enumerate}

\end{enumerate}

The following surjectivity result for operators which are $L-$pseudomonotone will be used in our existence theorem in section 4 (cf. \cite[Theorem 2.1]{PPR99}).
\begin{theorem}\label{thm0}
If $E$ is a reflexive strictly convex Banach space, $L:D(L)\subset E\rightarrow E^*$ is a linear maximal monotone operator, and $A:E\rightarrow 2^{E^*}$ is a multivalued operator , which is bounded, coercive and $L-$pseudomonotone. Then $L+A$ is a surjective operator, i.e. for all $f\in E^*$, there exists $u\in E$ such that $Lu+Au\ni f$. 
\end{theorem}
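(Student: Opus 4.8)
The plan is to produce a solution of $Lu+Au\ni f$ as a weak limit of solutions of regularized equations, the only genuine difficulty being the passage to the limit in the unbounded part $L$. First I would record why coercivity controls $f$: since $\la u^*,u\ra_{E^*\times E}\ge c(\norm{u}_E)\norm{u}_E$ with $c(r)\To+\infty$, one has $\la u^*,u\ra_{E^*\times E}-\la f,u\ra_{E^*\times E}\ge (c(\norm{u}_E)-\norm{f}_{E^*})\norm{u}_E$, which is positive once $\norm{u}_E$ is large; this confines all approximate solutions to a fixed ball independent of the approximation parameter. Using reflexivity I would renorm $E$ by Troyanski's theorem so that $E$ and $E^*$ are both strictly convex; then the normalized duality map $J:E\to E^*$ is single-valued, bounded, demicontinuous, strictly monotone and coercive, and it will serve as a regularizer.

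For $\eps>0$ I would consider the regularized inclusion $Lu+Au+\eps Ju\ni f$. The operator $A+\eps J$ is bounded, strictly coercive and still $L$-pseudomonotone, and I would solve this inclusion by a Galerkin scheme. Since a linear maximal monotone operator in a reflexive space is densely defined, I can fix an increasing sequence of finite-dimensional subspaces $E_n\subset D(L)$ with dense union and seek $u_n\in E_n$, $u_n^*\in Au_n$ with $\la Lu_n+u_n^*+\eps Ju_n,v\ra_{E^*\times E}=\la f,v\ra_{E^*\times E}$ for all $v\in E_n$; existence of $u_n$ comes from a finite-dimensional surjectivity result for upper semicontinuous set-valued maps with nonempty compact convex values (a consequence of Brouwer's theorem). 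Testing with $v=u_n$ and using $\la Lu_n,u_n\ra_{E^*\times E}\ge 0$ (monotonicity of $L$ with $L0=0$) together with the coercivity estimate above confines $\{u_n\}$, hence also $\{u_n^*\}$, to bounded sets, so that after extracting subsequences one obtains a solution $u_\eps\in D(L)$, $u_\eps^*\in Au_\eps$ of the regularized inclusion, satisfied exactly in $E^*$.

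The decisive gain of the regularization is now apparent: because the regularized inclusion holds exactly in $E^*$, one has $Lu_\eps=f-u_\eps^*-\eps Ju_\eps$, so $\{Lu_\eps\}$ is uniformly bounded in $E^*$ — exactly the control of the unbounded operator that bare Galerkin cannot provide, since the unprojected $Lu_n$ is only determined modulo the annihilator $E_n^\perp$. Letting $\eps\downarrow 0$ and passing to subsequences I would obtain $u_\eps\too u$ in $E$, $u_\eps^*\too u^*$ in $E^*$ and $Lu_\eps=f-u_\eps^*-\eps Ju_\eps\too f-u^*$ in $E^*$; since the graph of a linear maximal monotone operator is convex and strongly closed, hence weakly closed, this forces $u\in D(L)$ and $Lu=f-u^*$. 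A weak lower semicontinuity argument, using the monotonicity of $L$ and the two weak convergences, then gives the sign condition $\limsup_{\eps\downarrow 0}\la u_\eps^*,u_\eps-u\ra_{E^*\times E}\le 0$, and property $(c')$ of $L$-pseudomonotonicity yields $u^*\in Au$ together with $\la u_\eps^*,u_\eps\ra_{E^*\times E}\to\la u^*,u\ra_{E^*\times E}$. Combining $u^*\in Au$ with $Lu=f-u^*$ gives $f\in Lu+Au$, which is the claimed surjectivity.

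The main obstacle is precisely the control of the unbounded $L$ across the limit, and it is concentrated already in solving the regularized inclusion exactly in $E^*$: a bare finite-dimensional projection only pins down $Lu_n$ modulo $E_n^\perp$, so weak convergence of the approximants does not transfer to $\{Lu_n\}$, whereas the hypothesis $Lu_n\too Lu$ in $(c')$ must be available at the limit. The standard remedy is to approximate $L$ as well — for instance through its resolvents or Yosida-type approximations, which are everywhere defined and bounded — so that the approximate operators are genuinely solvable while $\{Lu_\eps\}$ stays bounded via the identity $Lu_\eps=f-u_\eps^*-\eps Ju_\eps$; verifying that these approximations converge to $L$ in the right sense and that the regularizing terms vanish in the limit without spoiling the sign condition is the technical heart of the argument. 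Everything else is a priori estimation from coercivity and routine weak-compactness.
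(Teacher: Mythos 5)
First, note that the paper does not actually prove Theorem \ref{thm0}: it is quoted from \cite[Theorem 2.1]{PPR99}, so there is no internal proof to compare against and your attempt must stand on its own. Its overall architecture is the right one and agrees with the standard argument: the a priori bound from coercivity, the boundedness of $A$ turning the exact identity $Lu_\eps=f-u_\eps^*-\eps Ju_\eps$ into a bound on $Lu_\eps$, the weak closedness of the (convex, strongly closed) graph of $L$, and condition $(c')$ to identify $u^*\in Au$ after checking $\limsup\la u_\eps^*,u_\eps-u\ra_{E^*\times E}\le 0$. That final limit passage is correct as written.

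The genuine gap is the existence of exact solutions $u_\eps\in D(L)$ of the regularized inclusion, which is the crux of the theorem and is never established. Your Galerkin scheme in subspaces $E_n\subset D(L)$ produces $u_n$ satisfying the equation only against test elements of $E_n$, and --- as you yourself observe --- this pins down $Lu_n$ only modulo $E_n^{\perp}$; without a bound on $Lu_n$ in $E^*$ you can neither extract a weak limit of $Lu_n$ nor pass to the limit in $\la Lu_n,v\ra_{E^*\times E}$ for fixed $v$ (that would require $v\in D(L^*)$, which the basis need not satisfy), so the regularized equation is never shown to hold in $E^*$, and $(c')$ cannot be invoked at the Galerkin level either, since it presupposes $Lu_n\too Lu$. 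Note also that the $\eps J$ term is a red herring: it contributes nothing to controlling $L$ (the bound on $Lu_\eps$ uses only the boundedness of $A$ plus exact solvability), and $A$ is already coercive, so the regularization is applied to the wrong operator. The standard proof --- the one in \cite{PPR99} --- approximates $L$ itself by its resolvent/Yosida approximation $L_\lambda$, which is everywhere defined and bounded, solves $L_\lambda u_\lambda+Au_\lambda\ni f$ by the surjectivity theorem for bounded, coercive, pseudomonotone multivalued maps, and then passes to the limit $\lambda\downarrow 0$, where one must additionally reconcile the fact that $u_\lambda\notin D(L)$ with the requirement in $(c')$ that the approximating sequence lie in $D(L)$. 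You name this remedy in your closing paragraph but defer it as ``the technical heart''; since that heart is exactly what the theorem asserts beyond routine estimates, the proposal as it stands is an outline with the decisive step missing.
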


It is worth to mention that one can drop the strict convexity of the reflexive Banach space $E$. It suffices to invoke the Troyanski renorming theorem to get an equivalent norm so that the space itself and its dual are strictly convex(cf. \cite[Proposition 32.23, p.862]{Z90}).

\section{An Existence result}

Let $\Omega\subset\mathbb R^N$ be a bounded domain with Lipschitz boundary $\Gamma:=\partial\Omega$. Let $\lambda_N$ denote the $N-$dimensional Lebesgue measure and $\sigma$ the surface measure on $\Gamma$. For simplicity, we take $b=1$ and $a$ the $N-$dimensional matrix identity. Define the following product space
\[
\mathbb H=\{U=(u_1,u_2):\, u_1\in L^2(\Omega),\,u_2\in L^2(\Gamma)\}
\]endowed with the inner product
\[
\la U,V\ra_{\mathbb H}=\la u_1,v_1\ra_{L^2(\Omega)}+\la u_2,v_2\ra_{L^2(\Gamma)},\forall \,U=(u_1,u_2),\,V=(v_1,v_2)\in\mathbb H
\]and the induced natural norm $|.|:=\la.,.\ra_{\mathbb H}^{1/2}$. Set $\mu=\lambda_N\oplus \sigma$. Then $\mathbb H$ can be identified with $L^2(\ol\Omega,\mu)$. Identifying each function $u\in W^{1,2}(\Omega)$ with $U=(u_{|\Omega},u_{|\Gamma })$, one has that, $W^{1,2}(\Omega)$ is a dense subspace of $\mathbb H$. Define the Banach space
\[
\mathbb V=\{U=(u,u_{|\Gamma}):\,u\in W^{1,2}(\Omega)\}
\]and endow it with the norm
\[
\|U\|=\|u\|_{W^{1,2}(\Omega)}+\|u\|_{L^2(\Gamma)}
\] for all $U=(u,u_{|\Gamma})$ and $V=(v,v_{|\Gamma})$ is $\mathbb V$. It is easy to see that we can identify $\mathbb V$ with $W^{1,2}(\Omega)\oplus L^2(\Gamma)$ under this norm. Moreover, we emphasize that $\mathbb V$ is not a product space and since $W^{1,2}(\Omega)\hookrightarrow L^2(\Gamma)$ by trace theory $\mathbb V$ is topologically isomorphic to $W^{1,2}(\Omega)$ in the obvious way. It is also immediate that  $\mathbb V$ is compactly embedded into $\mathbb H$. We have then the Gelfand triple
\[
\mathbb V\subset\mathbb H\subset\mathbb V^*
\]with continuous and compact embeddings. The embedding $\Lambda:\mathbb V\rightarrow\mathbb H$ is defined in a natural way by $\Lambda(U)=(i(u),\gamma(u))$, where $i:W^{1,2}(\Omega)\rightarrow L^2(\Omega)$ is the natural embedding and $\gamma$ is the trace operator. It is obvious that $\Lambda$ is continuous and compact from $\mathbb V$ into $\mathbb H$. Consider the Laplacian operator  with multivalued nonmonotone dynamic boundary conditions described as follows
\begin{equation}\label{eq1}
\begin{cases}
\partial_t u-\Delta u+\partial \phi_1(u)\ni f_1,&\quad \text{in } [0,T]\times\Omega\\
\partial_t u+\partial_\nu u+au+\partial \phi_2(u)\ni f_2&\quad \text{in }[0,T]\times\Gamma\\
u(0)=u_0 &\quad\text{in }\Omega
\end{cases}
\end{equation}where $a\in L^\infty(\partial\Omega)$ with $a\geq a_0>0$ for some constant $a_0$ and $\phi_1,\,\phi_2$ are locally Lipschitz functions on $\mathbb R$. Let $\la .,.\ra$ denotes the duality between $\mathbb V$ and $\mathbb V^*$. The system \eqref{eq1} can be written as follows
\begin{equation}\label{eq2}
\begin{cases}
\partial_t U+A U+\partial \phi_1(u)\times\partial \phi_2(u)\ni (f_1,f_2)\\
U(0)=U_0
\end{cases}
\end{equation} where the operator $A:\mathbb V\rightarrow \mathbb V^*$ is defined by $AU=(-\Delta  u,\partial_\nu u+au)$ so that
\[
\la A(U),V\ra=\int_\Omega \nabla u.\nabla v\,dx+\int_\Gamma auv\,d\sigma\]for $U=(u,u_{|\Gamma}),\,V=(v,v_{|\Gamma})\in\mathbb V$. From \cite[Lemma 2.111]{CLM07}, it is clear that the operator $A$ is pseudomonotone. The continuity and coercivity of $A$ can be proved in the same way as for Robin boundary conditions. Now, by using the definition of Clarke subdifferential, the system \eqref{eq2} leads to
\begin{equation}
\begin{cases}
\la U'+A U-f,V-U\ra+\int_{\Omega}\phi_1^\circ(u;v-u)\,dx+\int_{\partial\Omega}\phi_2^\circ(u;v-u)\,d\sigma\geq 0\\
U(0)=U_0
\end{cases}
\end{equation}for all $U=(u,u_{|\Gamma}),\,V=(v,v_{|\Gamma})\in\mathbb V$ where $f=(f_1,f_2)\in\mathbb V^*$. Define $\phi:\mathbb V\rightarrow \mathbb R$ by $\phi(U)=\phi_1(u)+\phi_2(u_{|\Gamma})$ for $U=(u,u_{|\partial\Omega})$. It follows that $\partial \phi(U)\subset \partial \phi_1(u)\times\partial \phi_2(u_{|\partial\Omega})$. Then the problem \eqref{eq1} has a solution if the following problem has one
\begin{equation}\label{eqJ}
\begin{cases}
U'+A U+\Lambda^*\partial \phi(\Lambda(U))\ni f\\
U(0)=U_0
\end{cases}
\end{equation}where $\Phi(U)=\int_{\ol\Omega}\phi(U)\,d\mu$. It is clear that the equivalence holds, if $\phi_1$ is regular at $u$ or $\phi_2$ at $u_{|\Gamma}$. An equivalent formulation to \eqref{eqJ} reads: for every $V\in\mathbb V$,
\begin{equation}
\begin{cases}
\la U'+ A U-f,V-U\ra+\int_{\ol\Omega}\phi^\circ(U;V-U)\,d\mu\geq 0\\
U(0)=U_0
\end{cases}
\end{equation}

In what follows we need the spaces $\mathscr V=L^2(0,T;\mathbb V)$, $\mathscr H=L^2(0,T;\mathbb H)$ and $\mathscr W=\{w\in\mathscr V:\,w'\in\mathscr V^*\}$ where the time derivative involved in the definition of $\mathscr W$ is understood in the sense of vector valued distributions. As usual, we equip it with the norm $\|w\|_{\mathscr W}=\|w\|_{\mathscr V}+\|w'\|_{\mathscr V^*}$, which makes the space $\mathscr W$  a separable Banach space. Clearly $\mathscr W\subset\mathscr V\subset\mathscr H\subset\mathscr V^*$. Moreover, we denote the duality for the pair $(\mathscr V,\mathscr V^*)$ as follows
\[
\la\la f, V\ra\ra=\int_0^T\la f(t),V(t)\ra\,dt
\]for $f\in\mathscr V^*$ and $V\in\mathscr V$. It is know \cite{Z90} that the embedding $\mathscr W\subset C(0,T;\mathbb H)$ is continuous. The problem under consideration is as follows: find $U\in\mathscr W$ such that for all $V\in\mathbb V$ and a.e. $t\in(0,T)$
\begin{equation}\label{problem00}
\begin{cases}
\la U'(t)+A U(t)-f(t),V-U(t)\ra+\int_{\ol\Omega}\phi^\circ(t,x,U(t);V-U(t))\,d\mu(x)\geq 0\\[0.2cm]
U(0)=U_0
\end{cases}
\end{equation}

We will prove the existence of solutions to the heat problem with multivalued nonmonotone dynamic boundary condition by considering  functionals defined in $L^2(\ol\Omega)$. Define the functional $\Phi:(0,T)\times L^2(\ol\Omega,d\mu)\rightarrow\mathbb R$ by
\begin{equation}\label{eqj}
\Phi(t,U)=\int_{\ol\Omega}\phi(t,U(x))\,d\mu(x),\quad t\in(0,T),\,U\in\mathbb V.
\end{equation}
Let us consider the following hypotheses 

\begin{enumerate}
\item[~]$H(\phi)$\quad $\phi:(0,T)\times\ol\Omega\times\mathbb R^2\rightarrow\mathbb R$ is a function such that\\
~
\begin{enumerate}
\item[(i)] $\phi(t,.,\xi)$ is measurable for all $t\in(0,T)$, $\xi=(\xi_1,\xi_1)\in\mathbb R^2$ and $\phi_1(t,.,0)\in L^1(\ol\Omega)$.
\item[(ii)] $\phi(t,x,.)$ is locally Lipschitz for all $\in(0,T)$, $x\in\ol\Omega$
\item[(iii)] $(\nu_1,\nu_1)\in\partial \phi(t,x,(\xi_1,\xi_2)\Rightarrow |(\nu_1,\nu_2)|_{\mathbb R^2}\leq c(1+|(\xi_1,\xi_2)|_{\mathbb R^2})$ for all $t\in(0,T)$, $x\in\ol\Omega$ with $c>0$.
\item[(iv)] $\phi^\circ(t,x,(\xi_1,\xi_2),-(\xi_1,\xi_2))\leq d(1+|(\xi_1,\xi_2)|_{\mathbb R^2})$ for all $t\in(0,T)$, $x\in\ol\Omega$ with $d\geq 0$.
\end{enumerate}
\end{enumerate}
\begin{enumerate}
\item[~] $(H_0)$\quad $U_0\in \mathbb H$, $f\in\mathscr V^*$.
\end{enumerate}
One can see that if $\phi_1$ and $\phi_2$ satisfy assumptions similar to $H(\phi)$, then $H(\phi)$ holds. The following lemma will be proved in the same way as the similar one for functionals on $\Gamma$(cf. \cite{MO04,PH08}).
\begin{lemma}\label{lem1}
Assume $\phi:(0,T)\times\ol\Omega\times\mathbb R^2\rightarrow\mathbb R$ satisfies hypothesis $H(\phi)$. Then the functional $\Phi$ give by \eqref{eqj} is well defined and locally Lipschitz in the second variable(in fact, Lipschitz in the second variable on bounded subsets of $\mathbb H$, its generalized gradient satisfies the linear growth condition 
\[
(\xi_1,\xi_2)\in\partial \Phi(t,V)\Rightarrow \|(\xi_1,\xi_2\|_{\mathbb H}\leq c'(1+\|V\|_{\mathbb H})
\]with $c'>0$ and for its generalized directional derivative we have 
\[
\Phi^\circ(t,U;V)\leq \int_{\ol\Omega} \phi^\circ(t,x,U(x);V(x))\,d\mu(x)
\]for $t\in(0,T)$, $U,\,V\in \mathbb H$ and 
\[
\Phi^\circ(t,U;-U)\leq d'(1+\|U\|_{\mathbb H})
\]with $d'>0$.
\end{lemma}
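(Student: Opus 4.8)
The plan is to follow the classical Aubin--Clarke scheme for integral functionals over an $L^2$ space, here adapted to the finite measure $\mu=\lambda_N\oplus\sigma$ on $\ol\Omega$ (recall $\mathbb H=L^2(\ol\Omega,\mu)$). First I would settle finiteness and measurability. Since $\phi(t,x,\cdot)$ is locally Lipschitz, Lebourg's mean value theorem gives, for each $\xi\in\mathbb R^2$, a point $\theta\in(0,1)$ and $\nu\in\partial\phi(t,x,\theta\xi)$ with $\phi(t,x,\xi)-\phi(t,x,0)=\langle\nu,\xi\rangle_{\mathbb R^2}$; combined with the growth condition in $H(\phi)$(iii) this yields the pointwise estimate $|\phi(t,x,\xi)|\leq|\phi(t,x,0)|+c(1+|\xi|)|\xi|$. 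Because $\phi(t,\cdot,0)\in L^1(\ol\Omega)$ by $H(\phi)$(i) and $U\in\mathbb H$, the integrand $x\mapsto\phi(t,x,U(x))$ is $\mu$-integrable, so $\Phi(t,U)$ is finite; its measurability follows since $H(\phi)$(i)--(ii) make $\phi(t,\cdot,\cdot)$ a Carath\'eodory function.

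For the Lipschitz property I would again apply Lebourg's theorem pointwise to obtain $|\phi(t,x,U(x))-\phi(t,x,V(x))|\leq c(1+|U(x)|+|V(x)|)|U(x)-V(x)|$, then integrate and use the Cauchy--Schwarz inequality in $\mathbb H$:
\[
|\Phi(t,U)-\Phi(t,V)|\leq c\,\big\|1+|U|+|V|\big\|_{\mathbb H}\,\|U-V\|_{\mathbb H}.
\]
On a bounded subset of $\mathbb H$ the first factor is bounded, which delivers Lipschitz continuity on bounded sets, hence local Lipschitz continuity.

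The heart of the argument is the directional-derivative inequality. Starting from the definition
\[
\Phi^\circ(t,U;V)=\limsup_{W\to U,\,\lambda\downarrow 0}\frac{\Phi(t,W+\lambda V)-\Phi(t,W)}{\lambda},
\]
I would rewrite the difference quotient as a $\mu$-integral of the pointwise quotients and pass the $\limsup$ inside the integral by the reverse Fatou lemma; the required integrable majorant along the relevant sequences is supplied by the growth bound $H(\phi)$(iii). This gives
\[
\Phi^\circ(t,U;V)\leq\int_{\ol\Omega}\phi^\circ(t,x,U(x);V(x))\,d\mu(x),
\]
which is precisely Clarke's theorem on generalized gradients of integral functionals and, as a by-product, furnishes the aggregation inclusion $\partial\Phi(t,U)\subset\{\xi\in\mathbb H:\ \xi(x)\in\partial\phi(t,x,U(x))\ \mu\text{-a.e.}\}$ that I will use below.

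The two quantitative estimates then follow directly. For the gradient growth, any $\xi\in\partial\Phi(t,U)$ satisfies $\xi(x)\in\partial\phi(t,x,U(x))$ a.e., so $H(\phi)$(iii) gives $|\xi(x)|\leq c(1+|U(x)|)$; squaring, using $(1+s)^2\leq 2(1+s^2)$ and integrating over the finite space $(\ol\Omega,\mu)$ yields $\|\xi\|_{\mathbb H}\leq c'(1+\|U\|_{\mathbb H})$. For the sign condition, the directional-derivative inequality and $H(\phi)$(iv) give
\[
\Phi^\circ(t,U;-U)\leq\int_{\ol\Omega}\phi^\circ(t,x,U(x);-U(x))\,d\mu(x)\leq\int_{\ol\Omega}d\,(1+|U(x)|)\,d\mu(x),
\]
and one further Cauchy--Schwarz step (using $\int_{\ol\Omega}|U|\,d\mu\leq\mu(\ol\Omega)^{1/2}\|U\|_{\mathbb H}$) produces $\Phi^\circ(t,U;-U)\leq d'(1+\|U\|_{\mathbb H})$. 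The main obstacle is the interchange of $\limsup$ and integral in the directional-derivative step: one must verify the domination along arbitrary sequences $W\to U$, $\lambda\downarrow 0$ and confirm that $\phi^\circ(t,x,\cdot;\cdot)$ is jointly measurable, which is exactly where the Carath\'eodory structure and the growth hypothesis $H(\phi)$(iii) are indispensable.
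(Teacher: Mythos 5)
Your proposal is correct and follows exactly the classical Aubin--Clarke scheme (Lebourg's mean value theorem for finiteness and Lipschitz bounds, reverse Fatou for the directional-derivative inequality, pointwise aggregation for the gradient growth), which is precisely the argument of the references \cite{MO04,PH08} to which the paper defers for this lemma without writing out a proof. No genuine gap: the one delicate point you flag (domination along sequences $W\to U$ in $\mathbb H$, after passing to an a.e.-convergent subsequence) is handled by the growth condition $H(\phi)$(iii) in the standard way.
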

From \eqref{inclusion}, it is clear that in order to obtain the solvability of the problem \eqref{problem00}, it is enough to show that the problem 
\begin{equation}\label{prob1}
\begin{cases}
\partial_t U+ A U+\Lambda^*\partial \phi(\Lambda U)\ni f\\
U(0)=U_0
\end{cases}
\end{equation}admits a solution. The proofs are similar to the ones in \cite{MO04,PH08}.

\begin{proposition}
Suppose that hypotheses $H(\phi)$ and $H_0$ hold and $U$ is a solution of \eqref{prob1}, then there exists a constant $C>0$ such that 
\begin{equation}
\|U\|_{\mathscr W}\leq C(1+\|U_0\|_{\mathbb H}+\|f\|_{\mathscr V})
\end{equation}
\end{proposition}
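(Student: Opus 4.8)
The plan is to derive the bound from an energy estimate obtained by testing the equation with the solution itself. Since $U\in\mathscr W$ solves \eqref{prob1}, there is a measurable selection $\xi\in\mathscr H$ with $\xi(t)\in\partial\Phi(t,\Lambda U(t))$ for a.e.\ $t$ such that
\[
U'(t)+AU(t)+\Lambda^*\xi(t)=f(t)\quad\text{in }\mathbb V^*,\ \text{a.e. }t\in(0,T).
\]
The measurability and $\mathscr H$-integrability of $\xi$ follow from the growth bound in Lemma \ref{lem1} together with a standard selection argument. First I would pair this identity with $U(t)\in\mathbb V$ and integrate in time, using the integration-by-parts formula valid on $\mathscr W$, namely $\la U'(t),U(t)\ra=\tfrac12\tfrac{d}{dt}|U(t)|^2$, to obtain
\[
\tfrac12|U(t)|^2-\tfrac12|U_0|^2+\int_0^t\la AU(s),U(s)\ra\,ds+\int_0^t\la\xi(s),\Lambda U(s)\ra_{\mathbb H}\,ds=\int_0^t\la f(s),U(s)\ra\,ds.
\]

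Next I would estimate each term. For the principal part I would use the coercivity of $A$ (established for this operator exactly as in the Robin case), giving $\la AU(s),U(s)\ra\ge\alpha\|U(s)\|^2$ for some $\alpha>0$. For the multivalued term I would use the sign condition: since $\xi(s)\in\partial\Phi(s,\Lambda U(s))$, the definition of the generalized gradient and the last inequality in Lemma \ref{lem1} give
\[
-\la\xi(s),\Lambda U(s)\ra_{\mathbb H}\le\Phi^\circ(s,\Lambda U(s);-\Lambda U(s))\le d'(1+\|\Lambda U(s)\|_{\mathbb H})\le d'(1+C_\Lambda\|U(s)\|),
\]
where $C_\Lambda=\|\Lambda\|_{\mathcal L(\mathbb V,\mathbb H)}$. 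For the right-hand side I would use $\la f(s),U(s)\ra\le\|f(s)\|_{\mathbb V^*}\|U(s)\|$. Plugging these in and applying Young's inequality to absorb the linear-in-$\|U(s)\|$ contributions into $\alpha\|U(s)\|^2$, the $\mathbb H$-term on the left is nonnegative and the coercive term controls the whole $\mathscr V$-norm, so no Gronwall argument is needed; integrating up to $t=T$ yields
\[
\tfrac{\alpha}{2}\|U\|_{\mathscr V}^2\le\tfrac12|U_0|^2+C_1T+\tfrac{1}{\alpha}\|f\|_{\mathscr V^*}^2,
\]
and hence $\|U\|_{\mathscr V}\le C(1+\|U_0\|_{\mathbb H}+\|f\|_{\mathscr V^*})$ after taking square roots.

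Finally, I would bound the time derivative directly from the equation, writing $U'=f-AU-\Lambda^*\xi$ in $\mathscr V^*$. The operator $A$ is (linearly) bounded, so $\|AU\|_{\mathscr V^*}\le C\|U\|_{\mathscr V}$; the growth condition of Lemma \ref{lem1}, $\|\xi(s)\|_{\mathbb H}\le c'(1+\|\Lambda U(s)\|_{\mathbb H})$, together with the continuity of $\Lambda^*$ gives $\|\Lambda^*\xi\|_{\mathscr V^*}\le C(1+\|U\|_{\mathscr V})$. Combining these with the $\mathscr V$-estimate already obtained produces $\|U'\|_{\mathscr V^*}\le C(1+\|U_0\|_{\mathbb H}+\|f\|_{\mathscr V^*})$, and adding the two bounds gives the claimed estimate on $\|U\|_{\mathscr W}$. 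The only genuinely delicate points are the existence of the measurable $\mathscr H$-selection $\xi$ and the correct calibration of the Young's inequality constants so that the coercivity is not overwhelmed; the sign condition $H(\phi)(iv)$, distilled in Lemma \ref{lem1}, is exactly what prevents the nonmonotone term from destroying the a priori bound, and it is the crux of the argument.
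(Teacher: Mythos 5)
Your argument is correct and is exactly the standard energy estimate that the paper itself omits here (it merely states that ``the proofs are similar to the ones in \cite{MO04,PH08}''): pair the equation with $U$, use the coercivity of $A$, control the Clarke term via the sign condition $-\la\xi,\Lambda U\ra_{\mathbb H}\leq\Phi^\circ(\cdot,\Lambda U;-\Lambda U)\leq d'(1+\|\Lambda U\|_{\mathbb H})$ from Lemma~\ref{lem1}, absorb with Young, and then recover $\|U'\|_{\mathscr V^*}$ directly from $U'=f-AU-\Lambda^*\xi$ using the linear growth of $\partial\Phi$. The only point worth flagging is cosmetic: the norm on the right-hand side of the stated estimate should be $\|f\|_{\mathscr V^*}$ (which is what you correctly use), consistent with $f\in\mathscr V^*$ in $(H_0)$, rather than $\|f\|_{\mathscr V}$ as printed.
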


\begin{theorem}
If hypotheses $H(\phi)$ and $H_0$ hold, then problem \eqref{prob1} has a solution.
\end{theorem}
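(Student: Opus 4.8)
The plan is to recast the Cauchy problem \eqref{prob1} as a single stationary operator inclusion on $\mathscr V$ and to apply the surjectivity result of Theorem \ref{thm0}. First I would remove the initial datum: fixing any $\eta\in\mathscr W$ with $\eta(0)=U_0$ and substituting $U=w+\eta$, the new unknown $w$ satisfies $w(0)=0$, while $f$ and $A$ are replaced by a translated right-hand side and a translated operator that inherit all the structural properties of the originals. I may therefore assume $U_0=0$ without loss of generality. Next I introduce the linear operator $L:D(L)\subset\mathscr V\to\mathscr V^*$, $Lw=w'$, with domain $D(L)=\{w\in\mathscr W:\,w(0)=0\}$; it is classical (see \cite{Z90}) that $L$ is densely defined, linear and maximal monotone, which is exactly the setting required by Theorem \ref{thm0}.

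Then I define the multivalued Nemytskii operator $\mathcal A:\mathscr V\to 2^{\mathscr V^*}$ by $\mathcal A U=\mathcal A_1 U+\mathcal A_2 U$, where $(\mathcal A_1 U)(t)=AU(t)$ and $\mathcal A_2 U=\{\Lambda^* z:\,z\in\mathscr H,\ z(t)\in\partial\Phi(t,\Lambda U(t))\ \text{for a.e. }t\in(0,T)\}$, so that the inclusion $LU+\mathcal A U\ni f$ is precisely \eqref{prob1}. The bulk of the work is to check that $\mathcal A$ meets the hypotheses of Theorem \ref{thm0}. Properties $(a)$ and $(b)$ follow from the fact that $A$ is single-valued and pseudomonotone together with the convexity and weak$^*$ compactness of $\partial\Phi(t,\cdot)$ supplied by Lemma \ref{lem1}. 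Boundedness of $\mathcal A$ is a direct consequence of the linear growth of $A$ and of the estimate $\|\xi\|_{\mathbb H}\le c'(1+\|V\|_{\mathbb H})$ for $\xi\in\partial\Phi$ from Lemma \ref{lem1}. For coercivity I would use that $A$ is coercive, so $\la\la\mathcal A_1 U,U\ra\ra\ge c_0\|U\|_{\mathscr V}^2-c_1$, while the sign condition $\Phi^\circ(t,U;-U)\le d'(1+\|U\|_{\mathbb H})$ of Lemma \ref{lem1} gives for $\xi\in\partial\Phi(t,\Lambda U(t))$ the lower bound $\la\xi,\Lambda U(t)\ra\ge-d'(1+\|\Lambda U(t)\|_{\mathbb H})$; integrating in time and using $\|\Lambda U\|_{\mathscr H}\le C\|U\|_{\mathscr V}$, this contributes only a term of lower order in $\|U\|_{\mathscr V}$, so coercivity of the sum is preserved.

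The genuinely delicate point, and the step I expect to be the main obstacle, is the $L$-pseudomonotonicity of $\mathcal A$, i.e. condition $(c')$. Since $A$ is pseudomonotone it is in particular $L$-pseudomonotone, and $L$-pseudomonotonicity is stable under sums, so it suffices to establish $(c')$ for the Clarke term $\mathcal A_2$. Here the compactness of the embedding $\Lambda:\mathbb V\to\mathbb H$ is decisive: given a sequence $u_n\rightharpoonup u$ in $\mathscr V$ with $Lu_n\rightharpoonup Lu$ in $\mathscr V^*$, an Aubin--Lions type argument upgrades this to strong convergence $\Lambda u_n\to\Lambda u$ in $\mathscr H$. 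Taking $u_n^*=\Lambda^* z_n\in\mathcal A_2 u_n$ with $u_n^*\rightharpoonup u^*$, the growth bound keeps $\{z_n\}$ bounded in $\mathscr H$, so along a subsequence $z_n\rightharpoonup z$; the strong convergence of $\Lambda u_n$ together with the upper semicontinuity of $(U,V)\mapsto\Phi^\circ(t,U;V)$ and the convexity and closedness of the subdifferential graph then yields $z(t)\in\partial\Phi(t,\Lambda u(t))$ a.e., hence $u^*=\Lambda^* z\in\mathcal A_2 u$, and passing to the limit in the pairings gives $\lim_{n\to\infty}\la\la u_n^*,u_n\ra\ra=\la\la u^*,u\ra\ra$. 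This verifies $(c')$.

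Finally, with all hypotheses of Theorem \ref{thm0} in force, $L+\mathcal A$ is surjective, so there exists $w\in D(L)$ with $Lw+\mathcal A w\ni f$; translating back by $\eta$ produces a solution $U$ of \eqref{prob1}, and the a priori bound of the preceding Proposition guarantees that $U$ lies in $\mathscr W$ with the asserted norm control.
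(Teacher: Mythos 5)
Your proposal is correct and follows essentially the same route as the paper: translate the initial condition to zero, rewrite \eqref{prob1} as $\mathcal LZ+\mathcal TZ\ni f$ with $\mathcal L=d/dt$ on $\{w\in\mathscr W:\,w(0)=0\}$ and $\mathcal T$ the sum of the Nemytskii operators for $A$ and $\Lambda^*\partial\Phi$, and apply the surjectivity result of Theorem \ref{thm0}. The only difference is one of detail: the paper defers the verification of boundedness, coercivity and $\mathcal L$-pseudomonotonicity to the references \cite{MO04,PH08}, whereas you sketch these verifications explicitly (correctly, via the growth and sign conditions of Lemma \ref{lem1} and the compactness of $\Lambda$).
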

\begin{proof}
By the density of $\mathbb V$ in $\mathbb H$, we may assume that $U_0\in\mathbb V$. Define the Nemytskii operators corresponding to $\mathcal A:\mathscr V\rightarrow\mathscr V^*$ and $\mathcal N:\mathscr V\rightarrow 2^{\mathscr V^*}$ as follow
\[
(\mathcal A V)(t)= AV(t)+AU_0
\]and
\[
\mathcal NV=\{\omega\in \mathscr H:\,\omega(t)\in\Lambda^*\partial \Phi(t,\Lambda(V(t)+U_0)),\,\text{ a.e. } t\in(0,T)\}
\]for all $V\in \mathscr V$. Problem \eqref{prob1} reads
\begin{equation}\label{prob2}
\begin{cases}
Z'(t)+\mathcal A Z(t)+\mathcal N Z(t)\ni f\\
Z(0)=0
\end{cases}
\end{equation}
We note that $Z\in\mathscr W$ is a solution to problem \eqref{prob2} if and only if $Z+U_0\in\mathscr W$ is a solution to problem \eqref{prob1}. Let $\mathcal L:D(\mathcal L)\subset\mathscr V\rightarrow \mathscr V^*$ be the operator defined by $\mathcal LV=V'$ with $D(\mathcal L)=\{w\in\mathscr W: w(0)=0\}$. It is well known (cf. \cite{Z90}) that $\mathcal L$ is a linear densely defined and maximal monotone operator. As a consequence, from\eqref{prob2} we obtain the problem
\begin{equation}\label{prob3}
\text{find }Z\in D(\mathcal L):\qquad \mathcal LZ+\mathcal TZ\ni f
\end{equation}where $\mathcal T:\mathscr V\rightarrow 2^{\mathscr V^*}$ is the operator given by $\mathcal T=\mathcal A+\mathcal N$. It is clear that problems \eqref{prob3} and \eqref{prob1} are equivalent. Now, to prove the existence of solutions to problem \eqref{prob3}, it suffices to use theorem \ref{thm0} and standard techniques from \cite{MO04,PH08}.

\end{proof}

\section{Partial generalized gradient}

Let $X$ be a Banach space. We say that a function $\phi:X\rightarrow \mathbb R$ is regular at $x$, if for all $v$, the usual one-sided directional derivative \[\phi'(x,v):=\displaystyle\lim_{h\downarrow0}\frac{\phi(x+hv)-\phi(x)}{h}\] exists and is equal to the generalized directional derivative $\phi^\circ(x;v)$.
Let $E=E_1\times E_2$, where $E_1$, $E_2$ are Banach spaces, and let $\phi:E\rightarrow\mathbb R$ be a locally Lipschitz function. We denote $\partial_1\phi(x_1,x_2)$ the partial generalized gradient of $\phi(.,x_2)$ at $x_1$, and by $\partial_2\phi(x_1,x_2)$ that of $\phi(x_1,.)$ at $x_2$. It is a fact that in general neither of the sets $\partial \phi(x_1,x_2)$ and $\partial_1 \phi(x_1,x_2)\times\partial_2 \phi(x_1,x_2)$ need be contained in the other. To be convinced it suffices to consider the function $f:\mathbb R^2\rightarrow\mathbb R$ defined by
\[
f(x,y)=(x\wedge (-y))\vee(y-x)
\]
From \cite[Example 2.5.2]{C83}, we have
\[
\partial_xf(0,0)\times\partial_yf(0,0)\not\subset\partial f(0,0)\not\subset \partial_xf(0,0)\times\partial_yf(0,0)
\]
For regular functions, however, a general relationship does hold between these sets. From \cite[Proposition 2.3.15]{C83} if $\phi$ is regular at $x=(x_1,x_1)\in E$, then
\begin{equation}\label{partial}
\partial \phi(x_1,x_2)\subset\partial_1 \phi(x_1,x_2)\times\partial \phi_2(x_1,x_2)
\end{equation}
and there is no reason that the equality holds even for regular functions. Next we will give a situation where the inclusion \eqref{partial} holds in a nonregular case, that is for functions with separated variables.

Consider two locally Lipschitz functions $\phi_1:E_1\rightarrow \mathbb R$ and $\phi_2:E_2\rightarrow \mathbb R$ and define the function $\phi:E=E_1\times E_2\rightarrow\mathbb R$ by 
\[
\phi(x_1,x_2)=\phi_1(x_1)+\phi_2(x_2)
\]We have the following result
\begin{proposition}\label{partial2}
The function $\phi$ is locally Lipschitz and for every $(x_1,x_2)\in E$, we have
\[
\partial \phi(x_1,x_2)\subset \partial \phi_1(x_1)\times\partial \phi_2(x_2)
\]Moreover, we have $\partial_k \phi=\partial \phi_k$ with $k=1,\,2$. If $\phi_1$ is regular at $x_1$ or $\phi_2$ at $x_2$, then equality holds.
\end{proposition}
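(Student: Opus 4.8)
The plan is to push everything through the Clarke directional derivative, since both the generalized gradient and the notion of regularity are defined through it. Local Lipschitz continuity is immediate: choosing neighbourhoods of $x_1$ and $x_2$ on which $\phi_1$ and $\phi_2$ are Lipschitz with constants $L_1$ and $L_2$, the identity $\phi(y_1,y_2)-\phi(z_1,z_2)=(\phi_1(y_1)-\phi_1(z_1))+(\phi_2(y_2)-\phi_2(z_2))$ together with the triangle inequality gives $|\phi(y)-\phi(z)|\le \max(L_1,L_2)\,\|y-z\|_E$ for $y,z$ near $(x_1,x_2)$. The first substantial step is the directional inequality
\[
\phi^\circ((x_1,x_2);(v_1,v_2))\le \phi_1^\circ(x_1;v_1)+\phi_2^\circ(x_2;v_2),
\]
which holds because the difference quotient of $\phi$ at $(w_1,w_2)$ in the direction $(v_1,v_2)$ splits exactly into the sum of the difference quotients of $\phi_1$ at $w_1$ and of $\phi_2$ at $w_2$, and the $\limsup$ of a sum never exceeds the sum of the $\limsup$s taken separately over each variable.

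From this the inclusion is a one-line test. If $(\xi_1,\xi_2)\in\partial\phi(x_1,x_2)$ then $\langle\xi_1,v_1\rangle+\langle\xi_2,v_2\rangle\le\phi^\circ((x_1,x_2);(v_1,v_2))$ for all $(v_1,v_2)$; choosing $v_2=0$ and using $\phi_2^\circ(x_2;0)=0$ yields $\langle\xi_1,v_1\rangle\le\phi_1^\circ(x_1;v_1)$ for every $v_1$, i.e. $\xi_1\in\partial\phi_1(x_1)$, and symmetrically $\xi_2\in\partial\phi_2(x_2)$. For the partial gradients, observe that as a function of $x_1$ alone, $\phi(\cdot,x_2)=\phi_1(\cdot)+\phi_2(x_2)$ differs from $\phi_1$ only by the additive constant $\phi_2(x_2)$; since the difference quotient, hence the generalized directional derivative and the gradient, is unaffected by adding a constant, we get $\partial_1\phi(x_1,x_2)=\partial\phi_1(x_1)$, and likewise $\partial_2\phi(x_1,x_2)=\partial\phi_2(x_2)$.

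The main obstacle is the equality under regularity, and I expect it to rest on upgrading one $\limsup$ to a genuine limit. Assume $\phi_1$ is regular at $x_1$ (the case of $\phi_2$ being symmetric). In view of the inclusion already established it suffices to reverse the directional inequality, i.e. to prove $\phi_1^\circ(x_1;v_1)+\phi_2^\circ(x_2;v_2)\le\phi^\circ((x_1,x_2);(v_1,v_2))$, because this forces equality above and then any $(\xi_1,\xi_2)\in\partial\phi_1(x_1)\times\partial\phi_2(x_2)$ satisfies $\langle\xi_1,v_1\rangle+\langle\xi_2,v_2\rangle\le\phi_1^\circ(x_1;v_1)+\phi_2^\circ(x_2;v_2)=\phi^\circ((x_1,x_2);(v_1,v_2))$, placing it in $\partial\phi(x_1,x_2)$. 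To obtain the reverse inequality I would select sequences $w_2^n\to x_2$ and $t_n\downarrow0$ realizing the $\limsup$ that defines $\phi_2^\circ(x_2;v_2)$, and pair them with the constant choice $w_1^n\equiv x_1$. Regularity of $\phi_1$ means the one-sided derivative $\phi_1'(x_1;v_1)=\lim_{t\downarrow0}t^{-1}\big(\phi_1(x_1+tv_1)-\phi_1(x_1)\big)$ exists and equals $\phi_1^\circ(x_1;v_1)$, so along $t_n\downarrow0$ with $w_1^n\equiv x_1$ the $\phi_1$-quotient converges to $\phi_1^\circ(x_1;v_1)$ as a true limit, while by construction the $\phi_2$-quotient converges to $\phi_2^\circ(x_2;v_2)$. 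Since $\phi^\circ$ is the $\limsup$ over all admissible sequences it dominates the limit along this particular one, which is exactly $\phi_1^\circ(x_1;v_1)+\phi_2^\circ(x_2;v_2)$. The delicate point is that only regularity lets the $\phi_1$-quotient converge along the very sequence $t_n$ for which the $\phi_2$-quotient is near its supremum; without it the two $\limsup$s need not be attained simultaneously and the sum can fail to be reached, which is precisely why equality is not expected in the general nonregular case.
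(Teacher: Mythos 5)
Your proof is correct and follows essentially the same route as the paper's: split the difference quotient, bound the joint $\limsup$ by the sum of the individual ones, test with $v_2=0$ to get the inclusion, and use regularity of $\phi_1$ to reverse the directional inequality. You in fact supply two details the paper leaves implicit, namely the sequence argument justifying $\phi^\circ(x;v)=\phi_1'(x_1;v_1)+\phi_2^\circ(x_2;v_2)$ under regularity and the additive-constant observation giving $\partial_k\phi=\partial\phi_k$.
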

\begin{proof}
Let $x=(x_1,x_2),\,y=(y_1,y_2)\in V_1\times V_2$, where $V_1\times V_2$ is some neighbourhood in $E$. Then
\begin{align*}
|\phi(y)-\phi(x)|\leq &|\phi_1(y_1)-\phi_1(x_1)|+|\phi_2(y_2)-\phi_2(x_2)|\\
\leq & K_1\|y_1-x_1\|_{E_1}+K_2\|y_2-x_2\|_{E_2}\\
\leq & (K_1+K_2)\sqrt{\|y_1-x_1\|_{E_1}^2+\|y_2-x_2\|_{E_2}^2}
\end{align*}
Let $z=(z_1,z_2)\in\partial j(x_1,x_2)$. Then for every $v=(v_1,v_2)\in E$ we have by definition
\[
\la z,v\ra_{E\times E^*}=\la z_1,v_1\ra_{E_1\times E_1^*}+\la z_2,v_2\ra_{E_2\times E_2^*}\leq \phi^\circ(x;v),
\]On the other hand, we get
\begin{align*}
\phi^\circ(x;v)=&\displaystyle\limsup_{(y_1,y_2)\to (x_1,x_2),h\downarrow 0}\frac{\phi((y_1,y_2)+h(v_1,v_2))-\phi(y_1,y_2)}{h}\\
=& \displaystyle\limsup_{(y_1,y_2)\to (x_1,x_2),h\downarrow 0}\frac{\phi_1(y_1+hv_1)-\phi_2(y_1)+\phi_2(y_2+hv_2)-\phi_2(y_2)}{h}\\
\leq& \displaystyle\limsup_{y_1\to x_1,h\downarrow 0}\frac{\phi_1(y_1+hv_1)-\phi_2(y_1)}{h}+\displaystyle\limsup_{y_2\to x_2,h\downarrow 0}\frac{\phi_2(y_2+hv_2)-\phi_2(y_2)}{h}\\
=& \phi_1^\circ(x_1;v_1)+\phi_2^\circ(x_2;v_2)
\end{align*}
It follows that for every $(v_1,v_2)\in E$,
\[
\la z_1,v_1\ra_{E_1\times E_1^*}+\la z_2,v_2\ra_{E_2\times E_2^*}\leq  \phi_1^\circ(x_1;v_1)+\phi_2^\circ(x_2;v_2)
\]
We take $v_2=0$, then for every $v_1\in E_1$ we have
\[
\la z_1,v_1\ra_{E_1\times E_1^*}\leq  \phi_1^\circ(x_1;v_1)
\]which means that $z_1\in\partial \phi_1(x_1)$. Analogously, we obtain that $z_2\in\partial \phi_2(x_2)$. Now, if $\phi_1$ is regular at $x_1$, we have
\begin{equation}\label{reg}
\phi^\circ(x;v)=\phi_1'(x_1;v_1)+\phi_2^\circ(x_2;v_2)=\phi_1^\circ(x_1;v_1)+\phi_2^\circ(x_2;v_2)
\end{equation}If $(z_1,z_2)\in \partial \phi_1(x_1)\times\partial \phi_2(x_2)$, then by \eqref{reg} we get $\la(z_1,z_2),(v_1,v_2)\ra_{E\times E^*}=\la z_1,v_1\ra_{E_1\times E_1^*}+\la z_2,v_2\ra_{E_2\times E_2^*}\leq \phi_1^\circ(x_1;v_1)+\phi_2^\circ(x_2;v_2)=\phi^\circ(x;v)$. Which means that $(z_1,z_2)\in \partial \phi(x_1,x_2)$. Similarly, if $\phi_2$ is regular at $x_2$, then equality holds.
\end{proof}
\begin{remark}If $\phi_1$ and $\phi_2$ are convex, then $j$ is convex and the generalized directional derivative coincides with the one sided directional derivative, then it is clear that \eqref{partial} in the general case holds and the equality in Proposition \ref{partial2} holds too.
\end{remark}

The well known Chang's Lemma (cf. \cite[Example 1]{chang80}) concerns the calculation of the Clarke's gradient of a locally Lipschitz function
$\phi:\mathbb R\rightarrow\mathbb R$ defined by
\[\phi(t)=\int_0^t\gamma(\xi)d\xi\]
where $\beta\in L^{\infty}_{loc}(\mathbb R)$. Consider the functions
\[\overline\gamma_\mu(t)=\esssup\limits_{|s-t|<\mu}\gamma(s)\quad \text{ and } \quad\underline\gamma_\mu(t)=\essinf\limits_{|s-t|<\mu}\gamma(s)\]
They are increasing and decreasing functions of $\mu$, respectively. Therefore, the limits for $\mu\to 0^+$ exist. We denote them by $\overline\gamma(t)$ and $\underline\gamma(t)$, respectively. Then it is proved by Chang that
\[\partial \phi(t)\subset[\underline\gamma(t),\overline\gamma(t)].\]
If in addition $\gamma(t\pm 0)$ exists for every $t\in\mathbb R$ then, the equality holds, i.e.
\[\partial \phi(t)=[\underline\gamma(t),\overline\gamma(t)].\]
Now, let $\gamma_1,\,\gamma_2\in L^{\infty}_{loc}(\mathbb R)$ and define a locally Lipschitz function $\phi:\mathbb R^2\rightarrow\mathbb R$ as follows
\[
\phi(t,s)=\int_0^t\gamma_1(\xi)d\,\xi+\int_0^s\gamma_2(\xi)d\,\xi
\]
\begin{theorem}[Chang's type Lemma]
One has
\begin{equation}\label{partial3}
\partial \phi(t,s)\subset\widehat{\gamma}_1(t)\times\widehat{\gamma}_2(s)
\end{equation}
If in addition $\gamma_1(t\pm 0)$ and $\gamma_2(s\pm 0)$ exist for every $t,s$ then
\[
\partial \phi(t,s)=\widehat{\gamma}_1(t)\times\widehat{\gamma}_2(s)
\]
\end{theorem}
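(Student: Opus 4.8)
The plan is to exploit the separated-variable structure of $\phi$. Writing $\phi_1(r)=\int_0^r\gamma_1(\xi)\,d\xi$ and $\phi_2(r)=\int_0^r\gamma_2(\xi)\,d\xi$, one has $\phi(t,s)=\phi_1(t)+\phi_2(s)$, so $\phi$ is exactly of the form treated in Proposition \ref{partial2} with $E_1=E_2=\mathbb R$. Each $\phi_i$ is locally Lipschitz, since $\gamma_i\in L^\infty_{loc}(\mathbb R)$ is essentially bounded on bounded intervals, and hence so is $\phi$; moreover the scalar Chang Lemma recalled above applies to each $\phi_i$ separately. The whole argument then amounts to combining Proposition \ref{partial2} with the one-dimensional Chang Lemma, treating the two inclusions in \eqref{partial3} in turn.

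For the inclusion \eqref{partial3}, Proposition \ref{partial2} gives $\partial\phi(t,s)\subset\partial\phi_1(t)\times\partial\phi_2(s)$ with no extra hypothesis. The scalar Chang Lemma yields $\partial\phi_1(t)\subset[\ul\gamma_1(t),\ol\gamma_1(t)]=\widehat\gamma_1(t)$ and likewise $\partial\phi_2(s)\subset\widehat\gamma_2(s)$. Composing the two inclusions gives $\partial\phi(t,s)\subset\widehat\gamma_1(t)\times\widehat\gamma_2(s)$, which is \eqref{partial3}.

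For the reverse inclusion under the hypothesis that $\gamma_1(t\pm0)$ and $\gamma_2(s\pm0)$ exist, the scalar Chang Lemma now gives the \emph{equalities} $\partial\phi_1(t)=\widehat\gamma_1(t)$ and $\partial\phi_2(s)=\widehat\gamma_2(s)$. It therefore suffices to prove $\widehat\gamma_1(t)\times\widehat\gamma_2(s)\subset\partial\phi(t,s)$, i.e. that every $(z_1,z_2)$ with $z_i\in\partial\phi_i$ lies in $\partial\phi(t,s)$. By the definition of the Clarke gradient this reduces to the pointwise inequality $z_1v_1+z_2v_2\le\phi^\circ((t,s);(v_1,v_2))$ for all $(v_1,v_2)$. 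Since $z_iv_i\le\phi_i^\circ(\,\cdot\,;v_i)$ whenever $z_i\in\partial\phi_i$, the reverse inclusion will follow once I establish the reverse of the bound proved in Proposition \ref{partial2}, namely $\phi^\circ((t,s);(v_1,v_2))\ge\phi_1^\circ(t;v_1)+\phi_2^\circ(s;v_2)$, and hence the full equality $\phi^\circ=\phi_1^\circ+\phi_2^\circ$.

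Proving this last equality is the main obstacle, and this is precisely where the one-sided-limit hypothesis enters. The difficulty is that in the definition of $\phi^\circ((t,s);(v_1,v_2))$ the two difference quotients share the \emph{same} increment $h$, while their base points $w_1,w_2$ are free; thus the $\limsup$ of the sum need not split into the sum of $\limsup$s in general, which is exactly why the equality can fail without the extra assumption. To overcome this I would show that when $\gamma_i(\,\cdot\,\pm0)$ exist the scalar generalized derivative is \emph{realizable for every fixed small $h$} by an explicit base point: one computes $\phi_i^\circ(r;v_i)=v_i\ol\gamma_i(r)$ for $v_i\ge0$ and $\phi_i^\circ(r;v_i)=v_i\ul\gamma_i(r)$ for $v_i<0$, and the corresponding value is attained, up to $o(1)$ as $h\downarrow0$, by a choice of the form $w_i=r$ or $w_i=r-hv_i$ (so that the averaging interval sits on the side of $r$ carrying the relevant one-sided limit). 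Choosing such $w_1^{(n)},w_2^{(n)}\to(t,s)$ along any sequence $h_n\downarrow0$ then drives both quotients to their respective scalar generalized derivatives simultaneously, yielding $\phi^\circ\ge\phi_1^\circ+\phi_2^\circ$; combined with the opposite inequality from Proposition \ref{partial2} this gives $\phi^\circ=\phi_1^\circ+\phi_2^\circ$ and closes the proof. The essential point is that the existence of the one-sided limits makes these extremal averages attainable for every small $h$, which is what permits the decoupling of the two variables despite the shared increment.
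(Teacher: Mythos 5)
Your argument is correct and follows essentially the same route as the paper: the forward inclusion is Proposition~\ref{partial2} combined with the scalar Chang lemma, and the reverse inclusion rests on the same key computation, namely that when $\gamma_1(t\pm 0)$ and $\gamma_2(s\pm 0)$ exist the relevant one-sided limits can be realized simultaneously in both coordinates by explicit base points sharing the common increment $h$, which is exactly how the paper decouples the two variables. The only cosmetic difference is the final packaging: you conclude via the additivity $\phi^\circ=\phi_1^\circ+\phi_2^\circ$ together with the support-function characterization of the Clarke gradient, whereas the paper shows directly that the corner points $\big(\gamma_1(t\pm 0),\gamma_2(s\pm 0)\big)$ belong to $\partial\phi(t,s)$ and then invokes the convexity of $\partial\phi(t,s)$ to fill in the rectangle --- the two conclusions are equivalent.
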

\begin{proof}
By proposition \ref{partial2} and the classical Chang's lemma discussed above, the inclusion \eqref{partial3} holds true. If further, we suppose that, for $k=1,\,2$, $\gamma_k(t\pm 0)$ exists for each $t\in\mathbb R$, then $\ul\gamma_k(t)=\min\{\gamma_k(t+ 0),\gamma_k(t- 0)\}$ and $\ol\gamma_k(t)=\max\{\gamma_k(t+ 0),\gamma_k(t- 0)\}$. By the definition of Clarke directional derivative at $(t,s)$ in the direction $(z_1,z_2)$, we have
\begin{align*}
\phi^\circ(t,s;z_1,z_2)=&\limsup\limits_{h_1,\,h_2\to0,\,\lambda\downarrow 0} \frac{1}{\lambda} \Big( \phi(t+h_1+\lambda z_1,s+h_2+\lambda z_2) - \phi(t+h_1,s+h_2) \Big) \\
=& \limsup\limits_{h_1,\,h_2\to0,\,\lambda\downarrow 0}\frac{1}{\lambda}\left[\int_{t+h_1}^{t+h_1+\lambda z_1}\gamma_1(\tau)\,d\tau+\int_{s+h_2}^{s+h_2+\lambda z_2}\gamma_2(\tau)\,d\tau\right]\\
\geq & \lim\limits_{h_1\to 0,\,\lambda\downarrow 0}\frac{1}{\lambda}\int_{t+h_1}^{t+h_1\lambda z_1}\gamma_1(\tau)\,d\tau+\lim\limits_{h_2\to 0,\,\lambda\downarrow 0}\frac{1}{\lambda}\int_{s+h_2}^{s+h_2+\lambda z_2}\gamma_2(\tau)\,d\tau\\
\geq &  \lim\limits_{h_1\to 0,\,\lambda\downarrow 0} z_1\int_{0}^{1}\gamma_1(t+h_1+\lambda z_1\tau)\,d\tau+\lim\limits_{h_2\to 0,\,\lambda\downarrow 0}z_2\int_{0}^{1}\gamma_2(s+h_2+\lambda z_2\tau)\,d\tau\\
\geq & \gamma_1(t\pm 0)z_1+\gamma_2(s\pm 0)z_2
\end{align*}
It follows then that 
\[
\big(\gamma_1(t\pm 0),\gamma_1(s\pm 0)\big)\in\partial \phi(t,s),\quad \text{ for all }(t,s)\in\mathbb R^2
\]
Since $\partial \phi(t,s)$ is convex, then the equality in \eqref{partial3} holds true.
\end{proof}

\section{Galerkin approximation}

The aim of this section is to consider the convergence of a numerical approximation constructed by Galerkin method. This method can also be an alternative way to prove the existence result in section 3, without making use of surjectivity results and pseudomonotone operators theory.

Let $\gamma_1,\,\gamma_2\in L^\infty_{\mathrm{loc}}(\mathbb R)$ and for $k=1,\,2$ define $\phi:\mathbb R^2\rightarrow\mathbb R$ by $\phi(t,s)=\phi_1(t)+\phi_2(t)$ for all $(t,s)\in \mathbb R^2$ where \[\phi_k(t)=\int_0^t\gamma_k(s)\,ds,\quad\text{ for all }t\in\mathbb R\]
Let $p\in C_0^\infty(\mathbb R)$ be a positive function with support in $[-1,1]$ such that $\int_{\mathbb R}p(\xi)\,d\xi=1$. For $\xi\in\mathbb R$ and $\varepsilon>0$, define the function $p_\varepsilon(\xi)=\frac{1}{\varepsilon}p(\frac{\xi}{\varepsilon})$ and for $k=1,\,2$ define

\[
\gamma_{k\varepsilon}(\xi)=\int_{\mathbb R}p_\varepsilon(\eta)\gamma_k(\xi-\eta)\,d\eta,\quad \xi\in\mathbb R, \varepsilon>0 
\]
We consider a Galerkin basis $\{Z_1,Z_2,\dots\}$ of $\mathbb V$ and let $\mathbb V_m=\mathrm{span}\{Z_1,Z_2,\dots,Z_m\}$ be the resulting $m-$dimensional subspaces. Let $\{U_{m0}\}_m$ be an approximation of the initial value $U_0$ such that $U_{m0}\in\mathbb V_m$, $U_{m0}\rightarrow U_0$ in $\mathbb H$ and $\{U_{m0}\}_m$ is bounded in $\mathbb V$. Let $\{\varepsilon_m\}_m$ be a sequence of real numbers converging to zero as $m\to\infty$. Instead of $\gamma_{k\varepsilon_m}$ we will write $\gamma_{km}$ and we will use the notation \[\gamma_m(t,s)=(\gamma_{1m}(t),\gamma_{2m}(s))\quad \forall(t,s)\in\mathbb R^2.\]

We consider the following regularized Galerkin system of finite dimensional differential equations: find $U_m=(u_m,u_{m|\Gamma})\in L^2(0,T;\mathbb V_m)$ with $U_m'\in L^2(0,T;\mathbb V_m)$ such that 
\begin{equation}\label{approx1}
\begin{cases}
\la U_m'(t)+AU_m(t),V\ra+\la\gamma_{m}(U_m),V\ra_{\mathbb H}=\la f(t),V\ra\\
U_m(0)=U_{m0}
\end{cases}
\end{equation}for a.e. $t\in(0,T)$ and for all $V=(v,v_{|\Gamma})\in\mathbb V$. The problem \eqref{approx1} can be written more explicitly as follows
\begin{equation}\label{approx2}
\begin{cases}
\la U_m'(t)+AU_m(t) ,V\ra+\int_{\Omega}\gamma_{1m}(u_m(t)).vdx+\int_{\Gamma}\gamma_{2m}(u_m(t)).vd\sigma=\la f(t),V\ra\\
U_m(0)=U_{m0}
\end{cases}
\end{equation}for a.e. $t\in(0,T)$ and for all $V\in\mathbb V$.

For the existence of solutions we will need the following hypothesis $H(\gamma)$ : for $k=1,\,2$ assume that 

\begin{itemize}
\item[~ ](Chang condition) $\gamma_k \in L_{loc}^{\infty}(\real),\, \gamma_k(t \pm 0)\text{ exists for any } t \in \real.$
\item[~ ] (Growth condition) for all $t\in \mathbb R$ we have
\[
|\gamma_k(t)|\leq c_k(1+|t|^{\theta_k})
\]with $c_k>0$ and $0\leq\theta_k\leq 1$.
\end{itemize}

\begin{theorem}
Let $H(\gamma)$ holds. Moreover, assume that one of the following situations holds
\begin{enumerate}\item[~]
\begin{enumerate}
\item[$\mathrm{(1)}$] $\theta_1,\,\theta_2<1$
\item[$\mathrm{(2)}$] $\theta_1<1$ and $\theta_2=1$ provided $c_2<\frac{M}{2\sqrt 2}$
\item[$\mathrm{(3)}$] $\theta_2<1$ and $\theta_1=1$ provided $c_1<\frac{M}{2\sqrt 2}$
\item[$\mathrm{(4)}$] $\theta_1=\theta_2=1$ provided $c_1+c_2<\frac{M}{2\sqrt 2}$
\end{enumerate}
\end{enumerate}
where $M$ is the coercivity constant of the operator $A$. Then problem \eqref{prob1} has at least one solution.
\end{theorem}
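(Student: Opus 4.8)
The plan is to prove existence for \eqref{prob1} directly through the regularized Faedo--Galerkin scheme \eqref{approx1}, in three stages: solvability of the finite-dimensional regularized problems, uniform a priori estimates, and passage to the limit together with the identification of the multivalued term. The linear structure of $A$ makes the passage to the limit in the principal part routine, so that the whole difficulty is concentrated in the last step.

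First I would fix $m$ and solve \eqref{approx2}. Since each $\gamma_{km}=\gamma_{k\varepsilon_m}$ is a mollification, it is smooth and Lipschitz on bounded sets; writing $U_m(t)=\sum_{j=1}^m c_j(t)Z_j$ turns \eqref{approx2} into a Carath\'eodory system of ordinary differential equations for the coefficients $(c_1,\dots,c_m)$. Local existence on an interval $[0,t_m]$ follows from the Carath\'eodory existence theorem, and global existence on all of $[0,T]$ will be a consequence of the a priori bound obtained in the next step.

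The heart of the argument is the a priori estimate. Testing \eqref{approx2} with $V=U_m(t)$, using $\la U_m'(t),U_m(t)\ra=\tfrac12\frac{d}{dt}|U_m(t)|^2$ and the coercivity $\la AU_m(t),U_m(t)\ra\geq M\|U_m(t)\|_{\mathbb V}^2$, and integrating over $(0,t)$, I would estimate the nonlinear contribution
\[
\la\gamma_m(U_m),U_m\ra_{\mathbb H}=\int_\Omega\gamma_{1m}(u_m)u_m\,dx+\int_\Gamma\gamma_{2m}(u_m)u_m\,d\sigma
\]
by means of the growth condition in $H(\gamma)$. Crucially, mollification preserves the linear growth without inflating the constant multiplying $|t|$, so the relevant coefficients remain $c_1,c_2$ uniformly in $m$. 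When $\theta_k<1$ the integrand is controlled by the subquadratic quantity $|u_m|^{\theta_k+1}$, which Young's inequality absorbs into $M\|U_m\|_{\mathbb V}^2$ for arbitrary $c_k$; this settles case $(1)$. When $\theta_k=1$ the contribution is genuinely quadratic, of order $c_k|U_m|_{\mathbb H}^2$, and here the smallness hypotheses $c_k<M/(2\sqrt2)$ (respectively $c_1+c_2<M/(2\sqrt2)$) enter: combining the interior and boundary quadratic parts, they are exactly what is needed to keep this contribution strictly below the coercivity term, leaving a positive coefficient in front of $\|U_m\|_{\mathbb V}^2$. A Gronwall argument then yields bounds for $\{U_m\}$ in $L^\infty(0,T;\mathbb H)\cap\mathscr V$ independent of $m$; reading $U_m'$ off the equation and using the boundedness of $A$ together with the bound for $\{\gamma_m(U_m)\}$ in $\mathscr H$, one obtains a bound for $\{U_m'\}$ in $\mathscr V^*$, hence for $\{U_m\}$ in $\mathscr W$.

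With these bounds I would extract a subsequence such that $U_m\rightharpoonup U$ in $\mathscr W$ (so that $U_m'\rightharpoonup U'$ in $\mathscr V^*$) and $\gamma_m(U_m)\rightharpoonup\xi$ in $\mathscr H$; by the compactness of $\mathbb V\hookrightarrow\mathbb H$ and the Aubin--Lions lemma, $U_m\to U$ strongly in $\mathscr H$ and, after a further extraction, $\mu$-a.e.\ on $(0,T)\times\ol\Omega$. The principal and source terms pass to the limit at once, $A$ being linear and bounded. The delicate point --- and the main obstacle --- is to identify $\xi$ as an element of $\partial\Phi(\Lambda U)$. Here I would combine the Chang-type characterization $\partial\phi_k(t)=[\underline{\gamma}_k(t),\overline{\gamma}_k(t)]$ of Section 5, valid under the Chang condition that $\gamma_k(t\pm0)$ exist, with a convergence lemma for mollified superposition operators: since $u_m\to u$ a.e.\ and $\gamma_{km}(u_m)\rightharpoonup\xi_k$ in $L^2$, an argument using Egorov's theorem, the a.e.\ convergence of the arguments, and Mazur's lemma shows that $\xi_k(t,x)\in[\underline{\gamma}_k(u(t,x)),\overline{\gamma}_k(u(t,x))]=\partial\phi_k(u(t,x))$ for a.e.\ $(t,x)$. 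By Proposition \ref{partial2} this gives $\xi\in\partial\Phi(\Lambda U)$, so that $U$ satisfies $U'+AU+\Lambda^*\xi=f$ with $\xi\in\partial\Phi(\Lambda U)$, i.e.\ $U$ solves \eqref{prob1}. Finally the initial condition $U(0)=U_0$ is recovered from $U_{m0}\to U_0$ in $\mathbb H$ and the continuity of the embedding $\mathscr W\subset C(0,T;\mathbb H)$, which completes the proof.
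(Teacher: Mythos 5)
Your proposal follows essentially the same route as the paper: the regularized Faedo--Galerkin scheme with Carath\'eodory local existence, the energy estimate obtained by testing with $U_m$ and splitting into the four cases according to $\theta_1,\theta_2$, weak compactness in $\mathscr W$ plus strong convergence in $\mathscr H$, and identification of the weak limit of $\gamma_m(U_m)$ via the Chang-type characterization of $\partial\phi_k$. The only cosmetic difference is that the paper closes the graph of $\partial\phi_1,\partial\phi_2$ by invoking the Aubin--Cellina convergence theorem where you argue directly with Egorov and Mazur (and the final inclusion $\xi\in\partial\Phi(\Lambda U)$ rests on the equality in the Chang-type Lemma of Section 5 rather than on Proposition \ref{partial2} alone), but these are equivalent under the Chang condition in $H(\gamma)$.
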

\begin{proof}
We substitute $U_m(t)= \sum_{k=1}^m c_{km}(t)Z_k$ in \eqref{approx2} to obtain an initial value problem for a system of first order ordinary differential equations for $c_{km}$, $k=1,\dots,m$, where the initial values $c_{km}(0)$ are given by $U_{m0}=\sum_{k=1}^m c_{km}(0) Z_k$. From the Caratheodory theorem, the solution $U_m$ exists on $[0,t_{\mathrm{max}})$, and we can extended it on the closed interval $[0,T]$ by using a priori estimates below. By replacing $V$ with $U_m$ in \eqref{approx2} we get for a.e. $t\in(0,T)$
\begin{align*}
\la U_m'(t)+AU_m(t) ,U_m(t)\ra+&\int_{\Omega}\gamma_{1m}(u_m).u_mdx\\
&+\int_{\Gamma}\gamma_{2m}(u_m).u_md\sigma=\la f(t),U_m(t)\ra
\end{align*}
Using the coercivity of $A$ and the Young inequality, we have
\begin{equation}
\frac{1}{2}\frac{d}{dt}|U_m(t)|^2+M\|U_m(t)\|^2+\la\gamma_{m}(U_m(t)),U_m(t)\ra_{\mathbb H}\leq\frac{1}{2M}\|f(t)\|_{\mathbb V^*}^2+\frac{M}{2}\|U_m(t)\|^2
\end{equation}for a.e. $t\in(0,T)$. Consequently
\begin{equation}
\frac{1}{2}\frac{d}{dt}|U_m(t)|^2+\frac{M}{2}\|U_m(t)\|^2+\la\gamma_{m}(U_m(t)),U_m(t)\ra_{\mathbb H}\leq\frac{1}{2M}\|f(t)\|_{\mathbb V^*}^2
\end{equation}
Integrating over $(0,t)$, we get
\begin{equation}
\frac{1}{2}|U_m(t)|^2-\frac{1}{2}|U_{0m}|^2+\frac{M}{2}\int_0^t\|U_m(s)\|^2\,ds+\int_0^t\la\gamma_{m}(U_m(s)),U_m(s)\ra_{\mathbb H}\,ds\leq\frac{1}{2M}\int_0^t\|f(s)\|_{\mathbb V^*}^2\,ds
\end{equation}
From the growth condition, we have 
\[
|\gamma_{k\varepsilon}(s)|\leq c_k(1+|s|^{\theta_k})
\] for $k=1,\,2$ and $s\in\mathbb R$. It follows that 
\begin{align*}
\int_{\Omega}|\gamma_{1m}(u_m(s,x))|^2\,dx\leq& c_1\int_\Omega(1+|u_m(s,x)|^{\theta_1})^2\,dx\\
\leq & 2c_1^2\int_{\Omega}(1+|u_m(s,x)|^{2\theta_1})\,dx\\
\leq & 2c_1^2\lambda_N(\Omega)+2c_1^2\lambda_N(\Omega)^{1-\theta_1}\|u_m(t)\|_{L^2(\Omega)}^{2\theta_1}\\
\leq & 2c_1^2\lambda_N(\Omega)+2c_1^2\lambda_N(\Omega)^{1-\theta_1}\|U_m(t)\|^{2\theta_1}
\end{align*}
Consequently
\begin{align*}
\|\gamma_{1m}(u_m)\|^2_{L^2(0,t;L^2(\Omega))}=&\int_0^t \|\gamma_{1m}(u_m(\tau))\|^2_{L^2(\Omega)}\,d\tau\\
\leq & 2tc_1^2\lambda_N(\Omega)+2c_1^2\lambda_N(\Omega)^{1-\theta_1}\int_0^t\|U_m(\tau)\|^{2\theta_1}\,d\tau\\
\leq & 2tc_1^2\lambda_N(\Omega)+2c_1^2\lambda_N(\Omega)^{1-\theta_1}t^{1-\theta_1}\|U_m\|_{L^2(0,t;\mathbb V)}^{2\theta_1}
\end{align*}
It then follows that
\[
\|\gamma_{1m}(u_m)\|_{L^2(0,t;L^2(\Omega))}\leq a_1+a_1' \|U_m\|_{L^2(0,t;\mathbb V)}^{\theta_1}
\]with $a_1=c_1\sqrt{2t\lambda_N(\Omega)}$ and $a'_1=c_1\sqrt{2\lambda_N(\Omega)^{1-\theta_1}t^{1-\theta_1}}$. This leads to
\begin{align*}
|\int_0^t\la\gamma_{1m}(u_m(\tau),u_m(\tau)\ra_{L^2(\Omega)\times L^2(\Omega)}\,d\tau|\leq &\int_0^t \|\gamma_{1m}(u_m(\tau))\|_{L^2(\Omega)}\|u_m(\tau)\|_{L^2(\Omega)}\,d\tau\\
\leq & \|\gamma_{1m}(u_m)\|_{L^2(0,t;L^2(\Omega))}\|u_m\|_{L^2(0,t;L^2(\Omega))}\\
\leq & (a_1+a_1' \|U_m\|_{L^2(0,t;\mathbb V)}^{\theta_1})\|U_m\|_{L^2(0,t;\mathbb V)}\\
\leq& a_1 \|U_m\|_{L^2(0,t;\mathbb V)}+a_1' \|U_m\|_{L^2(0,t;\mathbb V)}^{1+\theta_1}
\end{align*}
With same calculus we obtain
\begin{equation}
|\int_0^t\la\gamma_{2m}(u_m(\tau),u_m(\tau)\ra_{L^2(\Gamma)\times L^2(\Gamma)}\,d\tau|
\leq a_2 \|U_m\|_{L^2(0,t;\mathbb V)}+a_2' \|U_m\|_{L^2(0,t;\mathbb V)}^{1+\theta_2}
\end{equation}with $a_2=c_2\sqrt{2t\sigma(\Gamma)}$ and $a'_2=c_2\sqrt{2\sigma(\Omega)^{1-\theta_2}t^{1-\theta_2}}$. It follows that 
\begin{align*}
&\frac{1}{2}|U_m(t)|^2+\frac{M}{2}\int_0^t\|U_m(s)\|^2\,ds\leq\frac{1}{2M}\|f\|_{L^2(0,t;\mathbb V^*)}^2 +\frac{1}{2}|U_{0m}|^2\\
&+ a_1 \|U_m\|_{L^2(0,t;\mathbb V)}+a_1' \|U_m\|_{L^2(0,t;\mathbb V)}^{1+\theta_1}+a_2 \|U_m\|_{L^2(0,t;\mathbb V)}+a_2' \|U_m\|_{L^2(0,t;\mathbb V)}^{1+\theta_2}
\end{align*}
Which leads to
\begin{align*}
&\frac{1}{2}|U_m(t)|^2+\frac{M}{2}\|U_m\|_{L^2(0,t;\mathbb V)}^2\leq\frac{1}{2M}\|f\|_{L^2(0,t;\mathbb V^*)}^2 +\frac{1}{2}|U_{0m}|^2\\
&+ (a_1+a_2) \|U_m\|_{L^2(0,t;\mathbb V)}+a_1' \|U_m\|_{L^2(0,t;\mathbb V)}^{1+\theta_1}+a_2' \|U_m\|_{L^2(0,t;\mathbb V)}^{1+\theta_2}
\end{align*}
If $\theta_1,\,\theta_2<1$, it is clear that $\{U_m\}_m$ is bounded in $L^2(0,T;\mathbb V)$ with no additional conditions. Now if $\theta_1<1$ and $\theta_2=1$, then we have
\begin{align*}
&\frac{1}{2}|U_m(t)|^2+(\frac{M}{2}-a_2')\|U_m\|_{L^2(0,t;\mathbb V)}^2\leq\frac{1}{2M}\|f\|_{L^2(0,t;\mathbb V^*)}^2 +\frac{1}{2}|U_{0m}|^2\\
&\qquad+ (a_1+a_2) \|U_m\|_{L^2(0,t;\mathbb V)}+a_1' \|U_m\|_{L^2(0,t;\mathbb V)}^{1+\theta_1}
\end{align*}
As $1+\theta_1<2$, it follows that $\{U_m\}_m$ is bounded in $L^2(0,T;\mathbb V)$ provided $\frac{M}{2}-a_2'>0$. Similarly, if $\theta_1=1$ and $\theta_2<1$, it then follows that $\{U_m\}_m$ is bounded in $L^2(0,T;\mathbb V)$ provided $\frac{M}{2}-a_1'>0$. If $\theta_1=\theta_2=1$, then we get 
\begin{align*}
\frac{1}{2}|U_m(t)|^2+&(\frac{M}{2}-a_1'-a_2')\|U_m\|_{L^2(0,t;\mathbb V)}^2\leq\frac{1}{2M}\|f\|_{L^2(0,t;\mathbb V^*)}^2 +\frac{1}{2}|U_{0m}|^2\\
&\qquad+ (a_1+a_2) \|U_m\|_{L^2(0,t;\mathbb V)}
\end{align*}
It then follows that $\{U_m\}_m$ is bounded in $L^2(0,T;\mathbb V)$ provided $\frac{M}{2}-a_1'-a_2'>0$. As a summary, we conclude that $\{U_m\}_m$ is bounded in $L^2(0,T;\mathbb V)$ provided one of the following situations holds
\begin{enumerate}\item[~]
\begin{enumerate}
\item[(1)] $\theta_1,\,\theta_2<1$
\item[(2)] $\theta_1<1$ and $\theta_2=1$ provided $c_2<\frac{M}{2\sqrt 2}$
\item[(3)] $\theta_2<1$ and $\theta_1=1$ provided $c_1<\frac{M}{2\sqrt 2}$
\item[(4)] $\theta_1=\theta_2=1$ provided $c_1+c_2<\frac{M}{2\sqrt 2}$
\end{enumerate}
\end{enumerate}
When  $\{U_m\}_m$ is bounded in $L^2(0,T;\mathbb V)$ then it is also bounded in $L^\infty(0,T;\mathbb H)$, so passing to a subsequence, if necessary, we have 
\[
U_m\rightarrow U\text{ weakly in } L^2(0,T;\mathbb V) \text{ and weakly}-^* \text{ in } L^\infty(0,T;\mathbb H)
\]where $U\in L^2(0,T;\mathbb V)\cap L^\infty(0,T;\mathbb H)$. From the above estimates we have also that $\{U'_m\}_m$ is bounded in $L^2(0,T;\mathbb V^*)$. Thus by passing to a subsequence, if necessary, we get 
\[
U_m\rightarrow U \text{ weakly in }\mathscr W \text{ with }U\in\mathscr W
\]
On the other hand, as $U_m$ converges weakly to $U$ in $L^2(0,T;\mathbb V)$ and $\mathbb V\subset \mathbb H$ compactly then $U_m$ converges to $U$ in $L^2(0,T;\mathbb H)$, which means that
\[
u_m\rightarrow u \text{ in }L^2(0,T;L^2(\Omega))\text{ and } u_{m|_\Gamma}\rightarrow u_{|_\Gamma} \text{ in }L^2(0,T;L^2(\Gamma))
\]
Since the mapping $\mathscr W\ni W\mapsto W(0)\in\mathbb H$ is linear and continuous, we have $U_m(0)\rightarrow U(0)$ weakly in $\mathbb H$, which together with $U_{m0}\rightarrow U_0$ entails $U(0)=U_0$.
Let now $V\in\mathbb V$ and denote $\Psi_m(t,x)=\psi(t)V_m(x)$ where $\psi\in C_0^\infty(0,T)$ and $V_m\in\mathbb V_m$ is such that $V_m\rightarrow V$ in $\mathbb V$, we have $\Psi_m\rightarrow \Psi$ in $\mathscr W$ with $\Psi(t,x)=\psi(t)V(x)$. It follows that 
\[
\int_0^T\la U'(t)+AU(t),\Psi_m(t)\ra\,dt+\int_0^T\la\gamma_m(U_m(t)),\Psi_m(t)\ra_{\mathbb H}\,dt=\int_0^T\la f(t),\Psi_m(t)\ra\,dt
\]
Passing to the limit and remarking that $\psi$ is chosen arbitrary we deduce that 
\[
\la U'(t)+AU(t),V\ra+\la \xi(t),V\ra_{\mathbb H}=\la f(t),V\ra
\]where $\xi=(\xi_1,\xi_2)$. It remains to prove that $\xi_1\in\widehat\gamma_1(u(t,x))$ for a.e. $(t,x)\in(0,T)\times\Omega$ and $\xi_2\in\widehat\gamma_2( u_{|_\Gamma}(t,x))$ for a.e. $(t,x)\in(0,T)\times\Gamma$. We apply the convergence theorem of Aubin and Cellina \cite{AC84} to the multifunctions $\partial \phi_1$ and $\partial \phi_2$. First, we observe that $\partial \phi_1,\,\partial \phi_2:\mathbb R\rightarrow 2^{\mathbb R}$ are upper semicontinuous. Since $u_m\rightarrow u$ in $L^2(0,T;L^2(\Omega))$ and $ u_{m|_\Gamma}\rightarrow u_{|_\Gamma}$ in $L^2(0,T;L^2(\Gamma))$, then by the definition of $\widehat{\gamma}_1$ and $\widehat{\gamma}_2$, we deduce that for a.e $(t,x)\in(0,T)\times\Omega$ and for every neighbourhood $\mathcal N$ of zero in $\mathbb R^2$ there exists $n_0=n_0(t,x,\mathcal N)\in\mathbb N$ such that
\[
(u_m(t,x),\gamma_{1m}(u_m(t,x)))\in Gr\,\partial \phi_1+\mathcal N,\quad\text{for all }n\geq n_0
\]
By passing to the limit we get
\[
\xi_1(t,x)\in\ol{\mathrm{conv}}\partial \phi_1(u(t,x))=\partial \phi_1(u(t,x))
\]for a.e. $(t,x)\in(0,T)\times\Omega$. Analogously, we get
\[
\xi_2(t,x)\in\partial \phi_2( u_{|_\Gamma}(t,x))
\]for a.e. $(t,x)\in(0,T)\times\Gamma$, which completes the proof.

\end{proof}

\section{Concluding remarks}

In this paper, we introduced a new class of hemivariational inequalities, namely dynamic boundary hemivariational inequalities. It concerns dynamic boundary conditions with a Clarke subdifferential perturbation on the boundary. The suitable framework to study such problems is to work on a product space instead of the state space itself. We chose to work with dynamic boundary condition in its simplest form but we nevertheless could work with a general uniformly elliptic operator or even in the $L^p$ framework with the $p-$Laplacian. Moreover, with some changes on the choice of the product spaces, one can incorporate the Laplace-Beltrami operator on the boundary in addition of the usual Laplacian. On the other hand, one can replace the growth condition in  section 6 by the Rauch condition expressing the ultimate increase of the graphs of functions $\beta_k$.

As a continuation of this paper we aim to study the abstract version of the current work and to look at hemivariational inequalities that can be formulated in terms of matrix operators on product spaces. Indeed, this formulation covers a wide range of examples, including second order problems, equations with delay, equations which are memory dependent.

\begin{acknowledgements}
We would like to express our gratitude to the Editor for taking time to handle the manuscript and to anonymous referees whose constructive comments are very helpful for improving the quality of our paper. 

\end{acknowledgements}

\section*{Availability of data and materials}
Not applicable

\section*{Competing interests}
There is no competing of interests concerning this manuscripts.

\section*{Funding}
There is no funding regarding this work.

\section*{Authors' contributions}

Authors contributed equally in this work.

\addcontentsline{toc}{section}{References}

\end{document}